    \DeclareMathOperator\fix{f\/ix}
    \DeclareMathOperator\id{id}
    \DeclareMathOperator\GL{GL}
    \DeclareMathOperator\diag{diag}
    \DeclareMathOperator\Trans{{T}}
    \DeclareMathOperator\dis{\triangle}
    \DeclareMathOperator\spn{span}
    \DeclareMathOperator\rank{rank}
    \DeclareMathOperator\Gr{Gr}
\newcommand{\bC}{{\mathbb C}} 
\newcommand{\bP}{{\mathbb P}}
\newcommand{\bR}{{\mathbb R}}
\newcommand{\spmatrix}[1]
{\mbox{\scriptsize\setlength\arraycolsep{0.5\arraycolsep}$\begin{pmatrix}#1\end{pmatrix}$}}
\newtheorem{lem}{Lemma}
\newtheorem{thm}{Theorem}
\newtheorem*{conv}{Convention}
{\theoremstyle{definition}

} {\theoremstyle{remark}
\newtheorem{rem}{Remark}
}
\begin{document}

\title{Projective Lines over Jordan Systems and Geometry of Hermitian Matrices}

\author{Andrea Blunck \and Hans Havlicek}
\date{}

\maketitle

\centerline{\emph{Dedicated to Mario Marchi on the occasion of his 70th
birthday}}

\vspace{12pt}

\begin{abstract}
Any set of $\sigma$-Hermitian matrices of size $n\times n$ over a field with
involution $\sigma$ gives rise to a \emph{projective line\/} in the sense of
ring geometry and a \emph{projective space\/} in the sense of matrix geometry.
It is shown that the two concepts are based upon the same set of points, up to
some notational differences.
\par~\par\noindent
\emph{Mathematics Subject Classification (2000):}  51B05, 15A57, 51A50\\
\emph{Key words: projective line over a ring, projective matrix space, Jordan
system, Hermitian matrices, Grassmannian, dual polar space}
\end{abstract}

\section{Introduction}\label{se:intro}

Let $R=K^{n\times n}$ be the ring of $n\times n$ matrices over a (not
necessarily commutative) field $K$ which admits an involution $\sigma$. We
denote by $H_\sigma\subset R$ the subset of $\sigma$-Hermitian matrices. We
exhibit two well known constructions: The \emph{projective line over the Jordan
system} $H_\sigma\subset R$ is a subset of the point set of the
\emph{projective line over the matrix ring} $R$. It comprises all points which
can be written in the form $R(T_2T_1-I,T_2)$ with $\sigma$-Hermitian matrices
$T_1, T_2$; cf.\ \cite[3.1.14]{blunck+he-05}. The \emph{projective space of
$\sigma$-Hermitian $n\times n$ matrices\/} is a subset of the point set of the
\emph{projective space of $n\times n$ matrices over} $K$. Its points are the
left row spaces of those matrices $(A,B)\in K^{n\times 2n}$ which have full
left row rank and are composed of blocks $A,B\in K^{n\times n}$ satisfying
$A(B^\sigma)^{\Trans}=B(A^\sigma)^{\Trans}$; cf.\ \cite[6.8]{wan-96}.
\par
We recall in Section~\ref{se:square} that the point set of the projective line
over $R$ is, up to a natural identification, the Grassmannian of
$n$-dimensional subspaces of $K^{2n}$ which in turn is nothing but the point
set of the projective space of $n\times n$ matrices over $K$. In
Section~\ref{se:herm} we exhibit the two subsets which arise from
$\sigma$-Hermitian matrices according to the above mentioned constructions. The
coincidence of these two subsets is not obvious. Indeed, in the ring-geometric
setting we get a set of points in terms of a \emph{parametric representation},
whereas in the matrix-geometric setting there is a \emph{matrix equation\/}
which has to be satisfied. Our main result (Theorem~\ref{thm:1}) states that
the two subsets coincide. The proof of one inclusion simply amounts to plugging
in the parametrisation in the matrix equation. Our proof of the other inclusion
is more involved. It uses the rather technical Lemma~\ref{lem:1} and
Lemma~\ref{lem:2}, which is geometric in flavour, as it deals with maximal
totally isotropic subspaces of a $\sigma$-anti-Hermitian sesquilinear form. For
a commutative field $K$ and $\sigma=\id_K$ our Lemma~\ref{lem:2} turns into the
result \cite[Satz~10.2.3]{blunck+he-05}.
\par
As an application we show in Remarks~\ref{rem:1}--\ref{rem:5} how several
results from ring geometry can be translated to projective matrix spaces.

\section{Square matrices}\label{se:square}

Let $K$ be any (not necessarily commutative) field and $n\geq 1$. We shall be
concerned with the ring $R:= K^{n\times n}$ of $n\times n$ matrices with
entries in $K$. Any $r\times s$ matrix over $R$ can be viewed as an $rn\times
sn$ matrix over $K$ which is partitioned into $rs$ blocks of size $n\times n$
and vice versa. An $r\times r$ matrix over $R$ is invertible if, and only if,
it is invertible as an $rn\times rn$ matrix over $K$.

Consider the free left $R$-module $R^2$ and the group $\GL_2(R)=\GL_{2n}(K)$ of
invertible $2\times 2$-matrices with entries in $R$. A pair $(A,B)\in R^2$ is
called \emph{admissible}, if there exists a matrix in $\GL_2(R)$ with $(A,B)$
being its first row. Following \cite[p.~785]{herz-95} and \cite{blunck+he-05},
the {\em projective line over\/} $R$ is the orbit of the free cyclic submodule
$R(I,0)$ under the natural right action of $\GL_2(R)$, where $I$ and $0$ denote
the $n\times n $ identity and $n\times n$ zero matrix over $K$, respectively.
So
\begin{equation}\label{eq:P(R)}
  \bP(R):=R(I,0)^{\GL_2(R)}
\end{equation}
or, in other words, $\bP(R)$ is the set of all $p\subset R^2$ such that
$p=R(A,B)$ for an admissible pair $(A,B)\in R^2$. Two admissible pairs
represent the same point precisely when they are left-proportional by a unit in
$R$, i.~e., a matrix from $\GL_n(K)$. Conversely, if for some pair $(A',B')\in
R^2$ and an admissible pair $(A,B)\in R^2$ we have $R(A',B')=R(A,B)$ then
$(A',B')$ is admissible too. This follows from
\cite[Proposition~2.2]{blunck+h-00b}, because in $R$ the notions of ``right
invertibility'' and ``invertibility'' coincide.
\par
The projective line over $R$ allows the following description which makes use
of the \emph{left row rank\/} of a matrix $X$ over $K$ (in symbols: $\rank X$):
\begin{equation}\label{eq:P-Knn}
    \bP(R) = \{R(A,B)\mid A,B\in R,\; \rank (A,B)=n\}.
\end{equation}
Here $(A,B)$ has to be interpreted as the matrix arising from $A$ and $B$ by
means of horizontal augmentation. By (\ref{eq:P-Knn}), the point set of
$\bP(R)$ is in bijective correspondence with the Grassmannian $\Gr_{2n,n}(K)$
of $n$-dimensional subspaces of $K^{2n}$ via
\begin{equation}\label{eq:via}
    \bP(R)\to\Gr_{2n,n}(K) : R(A,B) \mapsto \mbox{left row space of~} (A,B).
\end{equation}
See \cite{blunck-99}, \cite{blunck+h-00b}, and \cite{thas-71} for this result
and its generalisations.
\begin{conv}
We do not distinguish between a point of\/ the projective line $\bP(R)$ and its
corresponding subspace of $K^{2n}$ via \emph{(\ref{eq:via})}.
\end{conv}

Following \cite{blunck+he-05} a ring will be called \emph{stable\/} if it has
stable rank $2$. By \cite[2.6]{veld-85}, our matrix ring $R=K^{n\times n}$ is
stable. This means that for each $(A,B)\in R^2$ which is \emph{unimodular},
i.~e., there are $X,Y\in R$ with $AX+BY=I$, there exists a matrix $W\in R$ such
that $A+BW\in\GL_n(K)$. See \cite[\S~2]{veld-85}. Due to stableness two
important results hold: Firstly, any unimodular pair $(A,B)\in R^2$ generates a
point \cite[2.11]{veld-85}. Secondly, \emph{Bartolone's parametrisation\/}
\begin{equation}\label{eq:bartolone}
    R^2 \to \bP(R) : (T_1,T_2)\mapsto R(T_2T_1 - I, T_2)
\end{equation}
is well defined and surjective \cite{bart-89}. It allows us to write the
projective line $\bP(R)$ in the form
\begin{equation}\label{eq:t2t1}
    \bP(R) = \{R(T_2T_1 - I, T_2)\mid T_1,T_2\in R\}.
\end{equation}
Formula (\ref{eq:t2t1}) was put in a more general context in
\cite{blunck+h-03}, and we shall follow the notation from there. Altogether we
have four equivalent descriptions of the projective line over a matrix ring
$R=K^{n\times n}$.

\par
The point set $\bP(R)$ is endowed with the symmetric and anti-reflexive
relation \emph{distant\/} ($\dis$) defined by
\begin{equation*}
  \dis:=\big(R(I,0), R(0,I)\big)^{\GL_2(R)}.
\end{equation*}
For arbitrary points $p=R(A,B)$ and $q= R(C,D)$ of $\bP(R)$ we obtain
\begin{equation*}
  p\,\dis\, q\,\Leftrightarrow\,
  \begin{pmatrix}A&B\\C&D\end{pmatrix}
  \in \GL_2(R)=\GL_{2n}(K).
\end{equation*}
This in turn is equivalent to the complementarity of the $n$-dimensional
subspaces of $K^{2n}$ which correspond to $p$ and $q$ via (\ref{eq:via}). The
vertices of the \emph{distant graph\/} on $\bP(R)$ are the points of $\bP(R)$,
two vertices of this graph are joined by an edge if, and only if, they are
distant. A crucial property of the projective line over our ring $R$, and more
generally over any stable ring, is as follows \cite[1.4.2]{herz-95}: Given any
two points $p$ and $q$ there exists some point $r$ such that $p\dis r\dis q$.
This implies that the distant graph on $\bP(R)$ is connected and that its
diameter is $\leq 2$.
\par
For example, given $p=R(I,0)$ and any other point $q\in\bP(R)$ we have
$q=R(T_2T_1 - I, T_2)$ by (\ref{eq:t2t1}). Then $r:=R(T_1,I)$ has the required
property
\begin{equation}\label{eq:distkette}
    R(I,0) \dis R(T_1,I)\dis R(T_2T_1 - I, T_2).
\end{equation}

\par
Comparing the description of the point set $\bP(K^{n\times n})=\bP(R)$ in
(\ref{eq:P-Knn}) with the definition of the point set of the \emph{projective
space of $m\times n$ matrices over} $K$ in \cite[3.6]{wan-96} one sees
immediately that the two definitions coincide for $m=n\geq 2$ (due to our
convention from above). So in our setting proper rectangular matrices as in
\cite{wan-96} are not allowed and, to be compatible with \cite{wan-96}, we
assume from now on that $n\geq 2$, i.~e., we disregard the projective line
$\bP(K^{1\times 1})$. There is an immaterial difference though, as we make use
of the vector space $K^{2n}$ rather than the projective space on $K^{2n}$ as in
\cite{wan-96}. This is only done in order to simplify notation.
\par
The major difference in the two approaches concerns the \emph{additional
structure\/} which is imposed: In the ring-theoretic setting this is the notion
of \emph{distance}, whereas in the matrix-theoretic setting the concept of
\emph{adjacency\/} ($\sim$) is used. Recall that two $n$-dimensional subspaces
of $K^{2n}$ are called adjacent if, and only if, their intersection has
dimension $n-1$. The vertices of the \emph{Grassmann graph\/} on
$\Gr_{2n,n}(K)$ are the elements of $\Gr_{2n,n}(K)$, two vertices are joined by
an edge if, and only if, they are adjacent. The graph-theoretical distance
between two vertices $W_1,W_2$ of the Grassmann graph on $\Gr_{2n,n}(K)$ equals
their \emph{arithmetical distance\/} $\dim(W_1+W_2)-m$
\cite[Proposition~3.32]{wan-96}.

\par
However, also the structural approaches can be shown to be equivalent, because
adjacency can be expressed in terms of being distant and vice versa: Two points
of $\bP(R)$ are distant if, and only if, they are at arithmetical distance $n$
in the Grassmann graph. The description of $\sim$ in terms of $\dis$ is more
subtle, and we refer to \cite[Theorem~3.2]{blunck+h-05a} for further details.
\par
Even though Bartolone's parametrisation (\ref{eq:t2t1}) has its origin in ring
geometry, the identification from (\ref{eq:via}) allows its reinterpretation as
a surjective parametric representation of $\Gr_{2n,n}(K)$ in the form
\begin{equation}\label{eq:allgemein}
     R^2\to \Gr_{2n,n}(K) : (T_1,T_2) \mapsto \mbox{left row space of~}(T_2T_1-I,T_2).
\end{equation}
We sketch some applications of this result in the geometry of square matrices:

\begin{rem}\label{rem:1}
The mapping (\ref{eq:allgemein}) has the disadvantage of being non-injective,
    but by choosing a fixed matrix $T_1^{(0)}$ an injective mapping $R\to
    \Gr_{2n,n}(K)$ is obtained from (\ref{eq:allgemein}). This mapping is
    easily seen to be an embedding of the matrix space $R= K^{n\times n}$ in
    the projective matrix space $\Gr_{2n,n}(K)$. For $T_1^{(0)}=0$ one gets the
    ``usual'' embedding, like in \cite{wan-96}.

\end{rem}

\begin{rem}\label{rem:2}
The left row spaces of the matrices $(I,0)$ and $(T_2T_1-I,T_2)$ have
arithmetical distance $k$ if, and only if, $\rank T_2=k$. Thus a
parametrisation of the ``spheres'' of $\Gr_{2n,n}(K)$ with ``centre'' $(I,0)$
and ``radius'' (arithmetical distance) $k$ can be obtained from
(\ref{eq:allgemein}) by imposing the extra condition $\rank T_2=k$, while
$T_1\in R$ is arbitrary. In particular, the case $k=1$ can be treated by
restricting the choice of $T_2$ to matrices of the form $c^{\Trans}\cdot d$
with $c, d \in K^{n}\setminus\{0\}$.
\par
Similarly, we may also parametrise any maximal set of mutually adjacent
elements and any pencil of $\Gr_{2n,n}(K)$ containing the left row space of
$(I,0)$ as follows. Firstly, let $T_2:=c^{(0)}{}^{\Trans}\cdot d$ for a fixed
vector $c^{(0)}\in K^{2n}\setminus\{0\}$ and a variable vector $d\in K^{2n}$.
Then (\ref{eq:allgemein}) gives the set of all $n$-dimensional subspaces which
contain the $(n-1)$-dimensional subspace given by the linear system $ \sum
_{i=1}^n x_i c_i^{(0)} = x_{n+1}= \cdots = x_{2n}=0$, where the $c_i^{(0)}$s
are the coordinates of $c^{(0)}$. Secondly, let $T_2:=c^{\Trans}\cdot d^{(0)}$
for a fixed vector $d^{(0)}\in K^{2n}\setminus\{0\}$ and a variable vector
$c\in K^{2n}$. Then (\ref{eq:allgemein}) gives the set of all $n$-dimensional
subspaces which are contained in the $(n+1)$-dimensional row space of the
matrix $\spmatrix{I&0\\0&d^{(0)}}$. Thirdly, let $T_2:=c^{(0)}{}^{\Trans}\cdot
t\cdot d^{(0)}$ for fixed vectors $c^{(0)}, d^{(0)}\in K^{2n}\setminus\{0\}$
and a variable $t\in K$. Then (\ref{eq:allgemein}) gives a pencil of
$n$-dimensional subspaces or, in the terminology of
\cite[Definition~3.11]{wan-96}, a \emph{line\/} of our projective matrix space.
\end{rem}

\begin{rem}\label{rem:3}
Let $\widehat{K^{2n}}$ be the $2n$-dimensional right column space over $K$. It
is the dual of $K^{2n}$. For each $n$-dimensional subspace $W$ of $K^{2n}$ the
linear forms (column vectors) which vanish on $W$ constitute the
$n$-dimensional annihilating subspace $W^\circ\subset \widehat{K^{2n}}$. We may
assume that $W$ is the left row space of $(T_2T_1-I,T_2)$ with $T_1,T_2\in R$.
Then
\begin{equation}\label{eq:ann}
    W^\circ =\mbox{right column space of~}\begin{pmatrix}-T_2\\T_1T_2-I\end{pmatrix}.
\end{equation}
For $T_1=0$ this result is folklore.
\end{rem}

\begin{rem}\label{rem:4}
Let $\iota : R\to R$ be any \emph{Jordan isomorphism\/} (see
\cite[9.1]{herz-95} or \cite[Definition~3.7]{wan-96}, where the term
\emph{semi-isomorphism\/} is used instead). Then the mapping
$\Gr_{2n,n}(K)\to\Gr_{2n,n}(K)$ given by
\begin{equation}\label{eq:jordan}
    \mbox{left row space of~}(T_2T_1-I,T_2)
    \mapsto \mbox{left row space of~}(T_2^\iota T_1^\iota - I, T_2^\iota)
\end{equation}
is well-defined. This follows from \cite[Theorem~2.4]{bart-89} or
\cite[Satz~4.2.11]{blunck+he-05} by removing superfluous conditions about the
ground field; see also \cite[Theorem~9.1.1]{herz-95}. The well-definedness is
also a direct consequence of \cite[Theorem~4.4]{blunck+h-03}. In our setting
there is an easier proof: The Jordan isomorphism $\iota$ is either of the form
$X \mapsto Q^{-1} X^\gamma Q$, with $\gamma$ an automorphism of $K$ and
$Q\in\GL_n(K)$, or of the form $X \mapsto Q^{-1}(X^\delta)^{\Trans} Q$, with
$\delta$ an antiautomorphism of $K$ and $Q\in\GL_n(K)$. See, e.~g.,
\cite[Theorem~3.24]{wan-96}. In the first case (\ref{eq:jordan}) coincides with
the natural action of the semilinear bijection $K^{2n}\to K^{2n}:x\mapsto
x^{\gamma}\cdot \diag(Q,Q)$ on $\Gr_{2n,n}(K)$. In the second case we consider
the non-degenerate sesquilinear form
\begin{equation*}
    K^{2n}\times K^{2n}\to K : (x,y)\mapsto x\cdot
    \begin{pmatrix}0 & -Q^{-1}\\ Q^{-1} & \phantom{-}0 \end{pmatrix}\cdot (y^\delta)^{\Trans}.
\end{equation*}
It acts on $\Gr_{2n,n}(K)$ by sending $W\in\Gr_{2n,n}(K)$ to its perpendicular
subspace $W^\perp$ \cite[Proposition~3.42]{wan-96} and has the required
properties, since
\begin{equation*}
    (T_2^\iota T_1^\iota - I, T_2^\iota)\cdot
    \begin{pmatrix}0 & -Q^{-1}\\ Q^{-1} & 0 \end{pmatrix}\cdot
    \left((T_2T_1-I,T_2)^\delta\right)^{\Trans} =0.
\end{equation*}
The previous formulas show that (\ref{eq:jordan}) together with the natural
action of $\GL_2(R)=\GL_{2n}(K)$ provides \emph{a unified explicit description
of adjacency preserving transformations of\/} $\Gr_{2n,n}(K)$ which avoids the
distinction (like, e.~g., in \cite[Theorem~3.45]{wan-96}) between semilinear
bijections and non-degenerate sesquilinear forms.
\end{rem}

\section{$\sigma$-Hermitian matrices}\label{se:herm}

Suppose now that the field $K$ admits an antiautomorphism $\sigma$ such that
$\sigma^2=\id_K$. Such a mapping will be called an \emph{involution}. Observe
that we do not adopt any of the extra assumptions on $\sigma$ from
\cite[p.~306]{wan-96}. As before, we let $R=K^{n\times n}$ and the identity
matrix of size $k\times k$ is written as $I_k$ or simply as $I$ if $k$ is
understood. The involution $\sigma$ determines the
\emph{$\sigma$-transposition}
\begin{equation*}
    \Sigma : R\to R : M=(m_{ij})\mapsto M^\Sigma:=(m_{ji}^\sigma).
\end{equation*}
It is an antiautomorphism of $R$. The elements of $ H_\sigma:=\{X\in R \mid
X=X^\Sigma\}$ are the \emph{$\sigma$-Hermitian matrices\/} of $R$. If $M\in R$
is invertible then $M^{-\Sigma}$ is used as a shorthand for
$(M^{-1})^{\Sigma}=(M^{\Sigma})^{-1}$. In the special case that $\sigma=\id_K$
the field $K$ is commutative, and we obtain the subset of \emph{symmetric
matrices\/} of $K^{n\times n}$.

\par
The set $H_\sigma$ need not be closed under matrix multiplication. In the
terminology of \cite[3.1.5]{blunck+he-05} $H_\sigma$ is a \emph{Jordan
system\/} of $R$, where $R=K^{n\times n}$ is considered as an algebra over the
centre $Z(K)$ of $K$. This means that $H_\sigma$ is a subspace of the
$Z(K)$-vector space $R$ which contains $I$, and which has the property that
\begin{equation}\label{eq:(i)}
    A^{-1}\in H_\sigma \mbox{~~~for all~~~} A\in \GL_n(K)\cap H_\sigma.
\end{equation}
Moreover, $ H_\sigma$ is \emph{Jordan closed}, i.~e, it satisfies the condition
\begin{equation}\label{eq:(ii)}
    ABA\in H_\sigma \mbox{~~~for all~~~} A,B\in  H_\sigma.
\end{equation}
In \cite[Lemma~3.1.11]{blunck+he-05} is is shown that condition (\ref{eq:(ii)})
follows from (\ref{eq:(i)}) under a certain richness assumption on $ H_\sigma$
(called \emph{strongness\/} there). See also \cite{herz-92}.
\par
Generalising the definition in \cite[3.1.14]{blunck+he-05} we define the
\emph{projective line\/} over $ H_\sigma$ (irrespective of whether $H_\sigma$
is strong or not) by
\begin{equation}\label{eq:def}
    \bP( H_\sigma) = \{R(T_2T_1 - I, T_2)\mid T_1,T_2\in  H_\sigma\}.
\end{equation}
Note that this definition makes use of multiplication in the ambient matrix
ring $R=K^{n\times n}$ and that $\bP( H_\sigma)$ is a subset of the projective
line over the ring $R$. Since we do not adopt an assumption on the strongness
of $ H_\sigma$, we cannot apply any results from \cite{blunck+he-05}.
\par
We now recall the definition of the \emph{projective space of
$\sigma$-Hermitian matrices}. Following \cite[III~\S~3]{dieu-71} and
\cite[6.8]{wan-96} we consider the left vector space $K^{2n}$ and the
non-degenerate $\sigma$-anti-Hermitian sesquilinear form $\beta:K^{2n}\times
K^{2n}\to K$ given (with respect to the standard basis) by the matrix
\begin{equation}\label{eq:beta}
    \begin{pmatrix}0 & I_n\\-I_n & 0\end{pmatrix}\in\GL_{2n}(K).
\end{equation}
The basic notions and results about sesquilinear forms which will be used below
without further reference can be found in \cite[664--666]{cohen-95}. See also
\cite[I~\S 6--11]{dieu-71} and \cite[\S 8]{tits-74a}. For all
$x=(x_1,x_2,\ldots,x_{2n})\in K^{2n}$ we obtain
\begin{equation}\label{eq:trace}
    (x,x)^\beta = w-w^\sigma
    \mbox{~~~with~~~}
    w:=\sum_{i=1}^n \left( x_i x_{i+n}^\sigma \right)
\end{equation}
or, in other words, $\beta$ is \emph{trace-valued}. We read off from the upper
left corner of the matrix in (\ref{eq:beta}) that the span of the first $n$
vectors of the standard basis is totally isotropic (with respect to $\beta$).
Therefore all maximal totally isotropic subspaces have dimension $n$. The set
comprising all maximal totally isotropic subspaces is the point set of the
\emph{projective space of $\sigma$-Hermitian matrices\/} or, in another
terminology, the point set of the \emph{dual polar space\/} \cite{cameron-82a}
given by $\beta$. Suppose that $(A,B)\in R^2$ satisfies $\rank (A,B)=n$. Then
the ($n$-dimensional) row space of $(A,B)\in K^{n\times 2n}$ is totally
isotropic if, and only if,
\begin{equation}\label{eq:AB=BA}
    AB^\Sigma = BA^\Sigma.
\end{equation}
This is immediate by multiplying the matrix in (\ref{eq:beta}) by $(A,B)$ from
the left and $(A,B)^\Sigma$ from the right hand side; see
\cite[Proposition~6.41]{wan-96}.
\par
In terms of our convention from Section~\ref{se:square} our main result is as
follows:

\begin{thm}\label{thm:1}
Let $K$ be any field admitting an involution $\sigma$. The point set of the
projective space of $\sigma$-Hermitian $n\times n$ matrices over $K$ coincides
with the projective line over the Jordan system $H_\sigma$ of all
$\sigma$-Hermitian matrices of $R=K^{n\times n}$.
\end{thm}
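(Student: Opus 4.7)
The plan is to prove the two inclusions of Theorem~\ref{thm:1} separately. The easier inclusion $\bP(H_\sigma)\subseteq$~(dual polar space) is handled by direct substitution: setting $A:=T_2T_1-I$ and $B:=T_2$ for $T_1,T_2\in H_\sigma$, and using $T_1^\Sigma=T_1$, $T_2^\Sigma=T_2$, one computes
\begin{equation*}
AB^\Sigma = (T_2T_1-I)T_2 = T_2T_1T_2 - T_2 = T_2(T_1T_2-I) = BA^\Sigma,
\end{equation*}
so the matrix equation (\ref{eq:AB=BA}) is satisfied and $R(A,B)$ belongs to the projective space of $\sigma$-Hermitian matrices.

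For the reverse inclusion, let $p=R(A,B)$ with $(A,B)$ admissible and $AB^\Sigma=BA^\Sigma$. I would try to realise $p$ in the form (\ref{eq:def}) by mimicking the distance chain (\ref{eq:distkette}). Suppose for the moment that a matrix $T_1\in H_\sigma$ with $BT_1-A\in\GL_n(K)$ is available, and set $T_2:=(BT_1-A)^{-1}B$. A direct calculation gives
\begin{equation*}
T_2T_1-I = (BT_1-A)^{-1}\bigl(BT_1-(BT_1-A)\bigr) = (BT_1-A)^{-1}A,
\end{equation*}
so that $R(T_2T_1-I,T_2)=R(A,B)=p$. Moreover $T_2\in H_\sigma$, since
\begin{equation*}
(BT_1-A)B^\Sigma = BT_1B^\Sigma - AB^\Sigma = BT_1B^\Sigma - BA^\Sigma = B(BT_1-A)^\Sigma
\end{equation*}
by $T_1^\Sigma=T_1$ and the Hermitian hypothesis, and multiplying this identity on the left by $(BT_1-A)^{-1}$ and on the right by $(BT_1-A)^{-\Sigma}$ yields $T_2^\Sigma=T_2$.

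The whole proof therefore reduces to producing a matrix $T_1\in H_\sigma$ with $BT_1-A$ invertible, and this is the main obstacle. Via the identification (\ref{eq:via}), such a $T_1$ corresponds precisely to a point $R(T_1,I)\in\bP(H_\sigma)$ that is distant from both $R(I,0)$ and $p$, i.e., to a maximal totally isotropic $n$-subspace of $K^{2n}$ complementary to both $R(I,0)$ and $p$ with respect to $\beta$. The case $A\in\GL_n(K)$ is trivial, as one may take $T_1=0$; the singular case is the one requiring work. I expect Lemma~\ref{lem:2} to be the geometric statement guaranteeing the existence of a common maximal totally isotropic complement to two prescribed ones, and Lemma~\ref{lem:1} to be the matrix-level preparation producing an explicit Hermitian $T_1$, presumably via a Witt-type extension or a careful rank manipulation adapted to $\sigma$-Hermitian forms over a possibly skew field $K$. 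Once $T_1$ is in hand, the calculations of the previous paragraph finish the proof.
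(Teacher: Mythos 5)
Your proposal is correct and follows essentially the same route as the paper: the easy inclusion by substituting the parametrisation into (\ref{eq:AB=BA}), and the hard inclusion by invoking Lemma~\ref{lem:2} to obtain a common maximal totally isotropic complement of $R(I,0)$ and $R(A,B)$, normalising it to the form $R(T_1,I)$ with $T_1\in H_\sigma$ and $BT_1-A\in\GL_n(K)$, and then computing $T_2:=(BT_1-A)^{-1}B$. The only cosmetic differences are that you verify $T_2^\Sigma=T_2$ by a direct matrix identity where the paper applies (\ref{eq:AB=BA}) to $(T_2T_1-I,T_2)$, and that Lemma~\ref{lem:1} is in fact only an ingredient in the proof of Lemma~\ref{lem:2}, not a separate device for producing $T_1$.
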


We postpone the proof until we have established two lemmas. We note that
Lemma~\ref{lem:2} generalises \cite[Satz~10.2.3]{blunck+he-05}, where $\beta$
is assumed to be an alternating bilinear form, i.~e., $K$ is commutative and
$\sigma=\id_K$. The proof from there makes use of the fact that all
one-dimensional subspaces of $K^{2n}$ are totally isotropic, but this property
does not hold in general. Therefore our proof follows another strategy.

\begin{lem}\label{lem:1}
Let $U=V\oplus W$ be a maximal totally isotropic subspace which is given as
direct sum of subspaces $V$ and $W$. Then there exists a maximal totally
isotropic subspace, say $X$, such that $X\cap V^\perp=W$.
\end{lem}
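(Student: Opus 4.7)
The plan is to build $X$ by extending $U$ to a hyperbolic splitting of $K^{2n}$ adapted to the decomposition $U=V\oplus W$. Since $\beta$ is non-degenerate and trace-valued (cf.\ \eqref{eq:trace}), the standard structure theory of sesquilinear forms (e.g.\ Witt's extension theorem applied to the hyperbolic splitting displayed in \eqref{eq:beta}) supplies a maximal totally isotropic subspace $U^*$ with $K^{2n} = U\oplus U^*$, and the restriction of $\beta$ to $U\times U^*$ is a non-degenerate pairing.

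Next, pick bases $v_1,\ldots,v_k$ of $V$ and $w_1,\ldots,w_\ell$ of $W$, so that their concatenation is a basis of $U$ and $k+\ell = n$. The non-degenerate pairing on $U \times U^*$ yields a dual basis $v_1^*,\ldots,v_k^*, w_1^*,\ldots,w_\ell^*$ of $U^*$ characterised by $(v_i,v_j^*)^\beta=\delta_{ij}$, $(w_i,w_j^*)^\beta=\delta_{ij}$, with the two cross-block pairings vanishing. Setting $V^*:=\spn(v_1^*,\ldots,v_k^*)$ and $W^*:=\spn(w_1^*,\ldots,w_\ell^*)$ produces the refined direct sum $K^{2n} = V \oplus W \oplus V^* \oplus W^*$ in which $V\perp W^*$ and $W\perp V^*$ by construction.

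The candidate is $X := W + V^*$. Its dimension equals $\ell + k = n$, and it is totally isotropic because $W$ and $V^*$ each are and $W\perp V^*$; hence $X$ is a maximal totally isotropic subspace. To verify $X\cap V^\perp = W$, first note that $V$, $W$ and $W^*$ are all contained in $V^\perp$ (the first because $V$ is totally isotropic, the second because $W\subseteq U\subseteq V^\perp$, the third by the dual-basis relations), and a dimension count ($\dim V^\perp = 2n-k = n+\ell$) forces $V^\perp = V\oplus W\oplus W^*$. For a generic vector $w+v^*\in X$ with $w\in W$ and $v^*\in V^*$, membership in $V^\perp$ then forces $v^* \in V^*\cap(V\oplus W\oplus W^*) = 0$, so that $X\cap V^\perp = W$ as required.

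The real obstacle is the existence of the totally isotropic complement $U^*$ equipped with the block-dual structure above; once this hyperbolic frame is in place, the remainder of the argument reduces to bookkeeping in the resulting coordinates. The hypothesis that $U = V\oplus W$ is a \emph{direct\/} sum enters precisely through the block-dual structure on $U^*$, which is what makes $V^*$ (rather than all of $U^*$) the correct complement to $W$ inside $X$.
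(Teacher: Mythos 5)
Your proof is correct, but it takes a genuinely different route from the paper's. The paper stays entirely inside a basis adapted to $V$, $W$, and $V^\perp$: it writes the Gram matrix of $\beta$ in block form as in (\ref{eq:M.alt}) (using trace-valuedness (\ref{eq:trace}) only to write the diagonal blocks as $D-D^\Sigma$ and $F-F^\Sigma$), then kills the offending blocks by explicit elementary row and column operations encoded in a concrete transition matrix $T\in\GL_{2n}(K)$, and finally reads off $X$ as the span of $n$ of the new basis vectors. You instead import the standard structure theory of non-degenerate trace-valued forms to get a totally isotropic complement $U^*$ of $U$ (a hyperbolic splitting, obtainable via Witt extension applied to the splitting visible in (\ref{eq:beta}), or directly from the hyperbolic-basis theorem), take the dual basis of $U^*$ relative to the concatenated basis of $U=V\oplus W$, and set $X=W\oplus V^*$; the verification $X\cap V^\perp=W$ then reduces to the dimension count $V^\perp=V\oplus W\oplus W^*$. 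Both arguments ultimately build the same kind of object --- a totally isotropic partial dual of $V$ orthogonal to $W$, adjoined to $W$ --- but yours buys brevity and conceptual transparency at the price of an external input, whereas the paper's elimination is self-contained (which matters for its stated aim of working without extra hypotheses on $\sigma$, though the input you use is covered by the background sources \cite{cohen-95}, \cite{dieu-71}, \cite{tits-74a} that the paper cites). One point deserves flagging: the fact you invoke (a maximal totally isotropic subspace admits a totally isotropic complement) is precisely the special case $U_1=U_2$ of Lemma~\ref{lem:2}, which the paper deduces \emph{from} Lemma~\ref{lem:1}; there is no circularity, since the hyperbolic-splitting theorem has an independent standard proof relying on trace-valuedness, but in a final write-up you should cite that theorem explicitly to make clear your argument does not feed back through Lemma~\ref{lem:2}.
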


\begin{proof}
Let $\dim V=k$. Due to $\dim W=n-k$, $\dim V^\perp = 2n-k$, and
$U=U^\perp\subset V^\perp$, there exists a basis $(b_1,b_2,\ldots,b_{2n})$ of
$K^{2n}$ such that
\begin{eqnarray}\label{eq:VWV}
  V       &=& \spn(b_1,b_2,\ldots,b_{k}), \nonumber\\
  W       &=& \spn(b_{k+1},b_{k+2},\ldots,b_{n}), \\
  V^\perp &=& U\oplus \spn(b_{n+k+1},b_{n+k+2},\ldots,b_{2n}).\nonumber
\end{eqnarray}
The remaining basis vectors $b_{n+1},b_{n+2},\ldots, b_{n+k}$ can be chosen
arbitrarily. The matrix of $\beta$ with respect to $(b_i)$ can be written in
block form as
\begin{equation}\label{eq:M.alt}\renewcommand\arraystretch{1.2}
    M=\left(\!\!
        \begin{array}{cccc}
        0 & 0 & A & 0 \\
        \cline{2-3}
        0         &\multicolumn{1}{|c}{0}        &\multicolumn{1}{c|}{B}          & C \\
        -A^\Sigma &\multicolumn{1}{|c}{-B^\Sigma}&\multicolumn{1}{c|}{D-D^\Sigma} & E \\
        \cline{2-3}
        0 & -C^\Sigma & -E^\Sigma &F-F^\Sigma
    \end{array}\!\!\right)
\end{equation}
with $A\in\GL_k(K)$ and $C\in \GL_{n-k}(K)$. We remark that appropriate
matrices $D\in K^{k\times k}$ and $F\in K^{(n-k)\times (n-k)}$ exist because of
(\ref{eq:trace}). Our aim is to go over to a new basis as follows: All basis
vectors in $V$, $W$, and $V^\perp$ will stay unchanged. The remaining basis
vectors $b_{n+1},b_{n+2},\ldots, b_{n+k}$ will be replaced in such a way that
all entries in the highlighted submatrix turn to zero when performing the
associate transformation on $M$.
\par
This task can easily be accomplished in terms of several elementary row and
column transformations: First one adds appropriate linear combinations of the
last $n-k$ rows of $M$ to the $k$ rows of the third horizontal block in order
to eliminate $-B^\Sigma$. This is possible, since $-C^\Sigma\in\GL_{n-k}(K)$.
Subsequently, the corresponding column transformations will eliminate $B$. Now
the $(3,3)$-block of the transformed matrix reads $D-D^\Sigma+(*)-(*)^\Sigma$.
Next, the first $k$ rows are used to eliminate $D+(*)$. This can be carried
out, due to $A\in\GL_{k}(K)$. Finally, one applies the corresponding column
operations. Altogether the transition from the basis $(b_i)$ to the new basis
$(b_j')$ is given by the matrix
\begin{equation*}\label{eq:T}
        T:=\begin{pmatrix}
         I_k & 0 & 0 & 0\\
         0 & I_{n-k} & 0 & 0 \\
         \left((E-B^\Sigma C^{-\Sigma}F)C^{-1}B-D\right)A^{-1} & 0 & \;\;I_{k}\;\; & -B^\Sigma C^{-\Sigma}\\
         0 & 0& 0& I_{n-k}
    \end{pmatrix}\in\GL_{2n}(K).
\end{equation*}
The elimination from above can be summarised as
\begin{equation}\label{eq:M.neu}\renewcommand\arraystretch{1.2}
    T\cdot M \cdot T^\Sigma =
    \begin{pmatrix}
         0& 0 & A & 0\\
         \cline{2-3}
         0 & \multicolumn{1}{|c}{0} & \multicolumn{1}{c|}{0} & C \\
         -A^\Sigma & \multicolumn{1}{|c}{0} & \multicolumn{1}{c|}{0} & E-B^\Sigma C^{-\Sigma}(F-F^\Sigma)\\
         \cline{2-3}
         0 & -C^\Sigma& -E^\Sigma+(F^\Sigma-F)C^{-1}B& F-F^\Sigma
    \end{pmatrix}
\end{equation}
and gives the new matrix for $\beta$. By our construction and due to the form
of the matrix in (\ref{eq:M.neu}), the basis vectors
$b_{k+1}',b_{k+2}',\ldots,b_{n+k}'$ generate a subspace $X$ with the required
properties.
\end{proof}

\begin{lem}\label{lem:2}
Let $U_1$ and $U_2$ be two maximal totally isotropic subspaces of
$(K^{2n},\beta)$. Then there exists a maximal totally isotropic subspace $X$
which is a common complement of $U_1$ and $U_2$.
\end{lem}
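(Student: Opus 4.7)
The plan is to reduce to the case $U_1 \cap U_2 = 0$ by passing to the quotient $V^\perp/V$ with $V := U_1 \cap U_2$, construct the common complement there explicitly by a dual-basis recipe, and then pull the construction back with the help of Lemma~\ref{lem:1}.

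Set $k := \dim V$. Since $V \subseteq U_1 = U_1^\perp$ and $V \subseteq U_2 = U_2^\perp$, both $U_1$ and $U_2$ lie in $V^\perp$, and a dimension count gives $V^\perp = U_1 + U_2$. The form $\bar\beta$ induced on $V^\perp/V$ is non-degenerate, $\sigma$-anti-Hermitian, and trace-valued, and its underlying space has dimension $2(n-k)$. The images $\bar U_i := U_i/V$ are maximal totally isotropic subspaces satisfying $\bar U_1 \cap \bar U_2 = 0$ and $\bar U_1 \oplus \bar U_2 = V^\perp/V$, so that $\bar\beta$ restricts to a non-degenerate pairing $\bar U_1 \times \bar U_2 \to K$. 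Pick a basis $e_1,\ldots,e_{n-k}$ of $\bar U_1$ and the dual basis $f_1,\ldots,f_{n-k}$ of $\bar U_2$ with $\bar\beta(e_i,f_j) = \delta_{ij}$, and set $\bar U := \spn(e_1+f_1,\ldots,e_{n-k}+f_{n-k})$. The $\sigma$-anti-Hermitian identity $\bar\beta(y,x) = -\bar\beta(x,y)^\sigma$ together with $\sigma(1) = 1$ gives $\bar\beta(e_i+f_i,e_j+f_j) = \delta_{ij} - \delta_{ij}^\sigma = 0$, so $\bar U$ is totally isotropic, and its trivial intersection with each $\bar U_i$ is immediate from the coordinates. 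Taking the preimage $U \subseteq V^\perp$ of $\bar U$ produces a maximal totally isotropic subspace of $K^{2n}$ with $V \subseteq U$ and $U \cap U_i = V$ for $i = 1, 2$.

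To finish, choose any vector-space complement $W$ of $V$ in $U$; then $U = V \oplus W$ and $W \cap U_i \subseteq U \cap U_i \cap W = V \cap W = 0$. Applying Lemma~\ref{lem:1} to this decomposition yields a maximal totally isotropic subspace $X$ of $K^{2n}$ with $X \cap V^\perp = W$. Since $U_1 \subseteq V^\perp$, one has $X \cap U_1 \subseteq W$, and combining with $W \cap U_1 = 0$ forces $X \cap U_1 = 0$; the symmetric argument yields $X \cap U_2 = 0$. Thus $X$ is a common complement of $U_1$ and $U_2$, as required.

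The main obstacle is the construction of $\bar U$, which is essentially the $U_1 \cap U_2 = 0$ case of the very statement being proved. The dual-basis recipe succeeds because the $\sigma$-anti-Hermitian identity interacts trivially with the $\sigma$-invariant scalars $0$ and $1$, so the isotropy verification is valid regardless of the characteristic of $K$ or the specific involution $\sigma$. The degenerate extremes $k = n$ (when $U_1 = U_2$ and Lemma~\ref{lem:1} is applied with $W = 0$) and $k = 0$ (when $U = \bar U$ already satisfies the conclusion) are handled uniformly by the same argument.
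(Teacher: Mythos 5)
Your proof is correct and follows essentially the same route as the paper: both construct an auxiliary maximal totally isotropic subspace $U=V\oplus W$ meeting each $U_i$ exactly in $V$ by summing dual basis pairs, and then invoke Lemma~\ref{lem:1} to produce $X$ with $X\cap V^\perp=W$. The only difference is presentational—you work in the quotient $V^\perp/V$ with the induced non-degenerate form, whereas the paper works inside $V^\perp$ with explicit complements $W_1\oplus W_2$ and normalises the Gram matrix $\left(\begin{smallmatrix}0&A\\-A^\Sigma&0\end{smallmatrix}\right)$ to obtain the same dual-basis configuration.
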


\begin{proof}
(a) Let $V:=U_1\cap U_2$ and put $k:=\dim V$. Choose subspaces $W_i$ such that
$U_i=V\oplus W_i$ for $i\in\{1,2\}$. Then $V^\perp=V\oplus W_1\oplus W_2$. The
restriction of $\beta$ to $V^\perp\times V^\perp$ might be degenerate, with
$V=V^{\perp\perp}$ being the radical of the restricted form. Consequently, the
restriction of $\beta$ to $(W_1\oplus W_2)\times (W_1\oplus W_2)$ is
non-degenerate. It will be written as $\beta_{12}$. There exist bases
$(b_1,b_2,\ldots,b_{n-k})$ and $(b_{n-k+1},b_{n-k+2},\ldots,b_{2n-2k})$ of
$W_1$ and $W_2$, respectively. The matrix of $\beta_{12}$ with respect to
$(b_{1},b_{2},\ldots,b_{2n-2k})$ has the form
\begin{equation}\label{eq:U1}
    \begin{pmatrix}
        0 & A\\-A^\Sigma&0
    \end{pmatrix}
    \mbox{~~~with~~~}A\in\GL_{n-k}(K).
\end{equation}
Let $A^{-1}$ be the matrix describing the change from the basis
$(b_{1},b_{2},\ldots,b_{n-k})$ of $W_1$ to a new basis
$(b_{1}',b_{2}',\ldots,b_{n-k}')$, say. Thus the matrix
\begin{equation*}
    \begin{pmatrix}
        A^{-1} & 0\\0&I
    \end{pmatrix}
    \begin{pmatrix}
        0 & A\\-A^\Sigma&0
    \end{pmatrix}
        \begin{pmatrix}
        A^{-1} & 0\\0&I
    \end{pmatrix}^\Sigma =
        \begin{pmatrix}
        0 & I\\-I&0
    \end{pmatrix}
\end{equation*}
describes $\beta_{12}$ with respect to the basis
$(b_1',\ldots,b_{n-k}',b_{n-k+1},\ldots,b_{2n-2k})$. Using this new matrix for
$\beta_{12}$ it is straightforward to show that
\begin{equation*}
    (b_r'+b_{n+r}, b_s'+b_{n+s})^\beta=0 \mbox{~~~for all~~~} r,s\in \{1,2\ldots,n-k\}.
\end{equation*}
Hence
\begin{equation*}
    W:=\spn (b_1'+b_{n+1}, b_2'+b_{n+2},\ldots,b_{n-k}'+b_{2n-k})
\end{equation*}
is a totally isotropic subspace. Furthermore, we have $\dim W= n-k$ and
$W_1\oplus W_2 = W_1\oplus W = W_2\oplus W$.
\par
(b) Let $U:=V\oplus W$, the sum being direct due to $V\cap (W_1\oplus W_2)=0$.
So $\dim U=n$. From $W\subset V^\perp\cap W^\perp$ follows $W^\perp\supset
V\oplus W=U$, whereas $V^\perp \supset V \oplus W = U$ is obvious. Therefore
$U\subset V^\perp\cap W^\perp = (V\oplus W)^\perp = U^\perp$. Summing up, we
have proved that $U$ is a maximal totally isotropic subspace of $K^{2n}$.
\par
By Lemma~\ref{lem:1}, applied to $U=V\oplus W$, there exists a maximal totally
isotropic subspace $X$ with $X\cap V^\perp=W$. Consequently, $X\cap U_i= (X\cap
V^\perp) \cap U_i= W\cap U_i=0$ which in turn shows that $X$ is a common
complement of $U_1$ and $U_2$.
\end{proof}

We are now in a position to give the promised proof of Theorem~\ref{thm:1}.

\begin{proof}
(a) Any point of the projective line over $ H_\sigma$ can be written in the
form $R(T_2T_1 - I, T_2)$ with $T_1,T_2\in H_\sigma$ according to
(\ref{eq:def}). Then
\begin{equation*}
    (T_2T_1 - I) T_2^\Sigma = T_2T_1T_2 - T_2 =  T_2(T_2T_1 - I)^\Sigma.
\end{equation*}
Now (\ref{eq:AB=BA}) shows that the left row space of $(T_2T_1 - I, T_2)$ is a
maximal totally isotropic subspace.
\par
(b) Let the left row space of $(A,B)$ be a maximal totally isotropic subspace.
We consider the maximal totally isotropic subspace given as left row space of
the matrix $(I,0)$. By Lemma~\ref{lem:2} there exists a maximal totally
isotropic subspace of $K^{2n}$ which is a common complement. In matrix form it
can be written as $(C,D)$. So in terms of $\bP(R)$ we have $R(I,0)\dis
R(C,D)\dis R(A,B)$ or, said differently, each of the matrices
\begin{equation*}
    \begin{pmatrix}I&0\\C&D\end{pmatrix},\;\;
    \begin{pmatrix}C&D\\A& B\end{pmatrix}
\end{equation*}
is invertible. We may thus put $D=I$ without loss of generality. Clearly,
\begin{equation*}
    \begin{pmatrix}C&I\\A-BC& 0\end{pmatrix}=
    \begin{pmatrix}I&0\\-B&I\end{pmatrix}
    \begin{pmatrix}C&I\\A& B\end{pmatrix}
    \in\GL_2(R)=\GL_{2n}(K),
\end{equation*}
whence $A-BC\in\GL_n(K)$. Defining $T_1:=C$ and $T_2:=(BC-A)^{-1}B$ gives
\begin{eqnarray*}
    R(T_2T_1 - I,T_2) &=& R\big((BC-A)^{-1}BC - I, (BC-A)^{-1}B\big)\\
                      &=& R\big(BC - (BC-A)I, B\big)\\
                      &=& R(A,B).
\end{eqnarray*}
Since the left row space of $(C,I)$ is totally isotropic, we have
$T_1=C=C^\Sigma\in H_\sigma$ by (\ref{eq:AB=BA}). Applying (\ref{eq:AB=BA}) to
the totally isotropic left row space of $(T_2T_1 - I,T_2)$ therefore gives
\begin{equation}\label{}
    T_2T_1T_2^\Sigma - T_2^\Sigma = T_2T_1 T_2^\Sigma - T_2,
\end{equation}
whence $T_2\in H_\sigma$. This completes the proof.
\end{proof}
\begin{rem}\label{rem:5}
In Remarks~\ref{rem:1}--\ref{rem:4} we sketched several applications of
Bartolone's parametrisation to the geometry of square matrices. In view of
Theorem~\ref{thm:1} it is now a straightforward task to carry them over,
\emph{mutatis mutandis}, to the geometry of $\sigma$-Hermitian matrices. For
example, in the projective space of $\sigma$-Hermitian matrices we can
parametrise any maximal set of mutually adjacent elements containing the left
row space of $(I,0)$ via matrices of the form $(T_2T_1-I,T_2)$ as follows:
$T_1\in H_\sigma$ is arbitrary, whereas $T_2:=c^{(0)}{}^{\Trans}\cdot t\cdot
c^{(0)}$ for a fixed vector $c^{(0)}\in K^{2n}\setminus\{0\}$ and a variable
$t\in K$ satisfying $t=t^{\sigma}$.
\end{rem}
In contrast to this analogy the following difference has to be pointed out: It
was shown in \cite[Section~4]{kwiat+p-09a} that the characterisation of
adjacency in $\Gr_{2n,n}(K)$ in terms of the distant relation from
\cite[Theorem~3.2]{blunck+h-05a} \emph{cannot be carried over literally\/} to a
projective space of symmetric matrices over a commutative field of
characteristic $2$. So the following problem arises: Is it possible to express
the adjacency relation on any projective space of $\sigma$-Hermitian matrices
in terms of the distant relation on $\bP(H_\sigma)$? An affirmative answer
would imply that the distant preserving bijections of $\bP(H_\sigma)$ are
precisely the adjacency preserving bijections of the projective matrix space
over $H_\sigma$.

\subsubsection*{Acknowledgement}

This work was carried out within the framework of the Cooperation Group
``Finite Projective Ring Geometries: An Intriguing Emerging Link Between
Quantum Information Theory, Black-Hole Physics, and Chemistry of Coupling'' at
the Center for Interdisciplinary Research (ZiF), University of Bielefeld,
Germany.


\noindent Andrea Blunck\\
Department Mathematik\\
Universit\"{a}t Hamburg\\
Bundesstra{\ss}e 55\\
D-20146 Hamburg\\
Germany\\
\texttt{andrea.blunck@math.uni-hamburg.de}
\\~\\
\noindent
Hans Havlicek\\
Institut f\"{u}r Diskrete Mathematik und Geometrie\\
Technische Universit\"{a}t\\
Wiedner Hauptstra{\ss}e 8--10/104\\
A-1040 Wien\\
Austria\\
\texttt{havlicek@geometrie.tuwien.ac.at}


\end{document}